\renewcommand{\author}[2][]{%
	\def\@tempa{#1}
	\ifx\@empty\authors
	\ifx\@tempa\@empty
	\gdef\shortauthors{#2}%
	\else
	\gdef\shortauthors{#1}%
	\fi
	\gdef\authors{\author{#2}}%
	\else
	\ifx\@tempa\@empty
	\g@addto@macro\shortauthors{\and#2}%
	\else
	\g@addto@macro\shortauthors{\and#1}%
	\fi
	\g@addto@macro\authors{\and\author{#2}}%
	\fi
}
\renewcommand{\address}[2][]{\g@addto@macro\authors{\address{#1}{#2}}}
\def\@setauthors{%
	\begin{center}%
		\footnotesize
		\vspace{20pt}
		\let\and\@empty
		\def\author##1{\advance\@tempcnta\@ne}%
		\def\address##1##2{\advance\@tempcntb\@ne}%
		\@tempcnta=\z@  \@tempcntb=\z@
		\authors
		\ifnum\@tempcnta>\@ne \ifnum\@tempcntb=\@ne
		\oneaddress
		\else
		\sepaddresses
		\fi
		\else
		\oneaddress
		\fi
	\end{center}%
}
\def\oneaddress{%
	\begingroup
	\let\author\@iden \let\address\@gobbletwo
	\renewcommand{\andify}{%
		\nxandlist{\unskip, }{\unskip{} and~}{\unskip, and~}}%
	\uppercasenonmath\authors
	\andify\authors
	\authors
	\endgroup
	\begingroup \let\and\relax \let\author\@gobble
	\def\address##1##2{\unskip\\[10pt] \itshape##2}%
	\authors
	\endgroup
}
\def\sepaddresses{%
	\begingroup
	\baselineskip10\p@\relax
	\def\address##1##2{ ({\itshape##2}\/)}
	\def\author##1{\def\temp{##1}\leavevmode\uppercasenonmath\temp\temp}%
	\nxandlist
	{,\\[\baselineskip]}
	{\\[\baselineskip] \textsc{\lowercase{and}}\\[\baselineskip]}
	{,\\[\baselineskip]\textsc{\lowercase{and}}\\[\baselineskip]}
	\authors % macro to operate on
	\authors
	\endgroup
}
\def\maketitle{\par
	\@topnum\z@
	\@setcopyright
	\thispagestyle{firstpage}%
	\uppercasenonmath\shorttitle
	\ifx\@empty\shortauthors \let\shortauthors\shorttitle
	\else
	\newcommand{\@xuppercasenonmath}[1]{\toks@\@emptytoks
		\@xp\@skipmath\@xp\@empty##1$$%
		\edef##1{\@nx\protect\@nx\@upprep\the\toks@}}%
	\@xuppercasenonmath\shortauthors
	\def\@@and{AND}
	\renewcommand{\andify}{%
		\nxandlist{\unskip, }{\unskip{ }\@@and{ }}{\unskip, \@@and{ }}}%
	\andify\shortauthors
	\fi
	\@maketitle@hook
	\begingroup
	\@maketitle
	\endgroup
	\c@footnote\z@
	\@cleartopmattertags
}
\def\@maketitle{%
	\normalfont\normalsize
	\let\@makefntext\noindent
	\@adminfootnotes
	\ifx\@empty\addresses\else \@footnotetext{\@setotheraddresses}\fi
	\global\topskip68\p@\relax
	\@settitle
	\ifx\@empty\authors \else \@setauthors \fi
	\ifx\@empty\@dedicatory
	\else
	\baselineskip26\p@
	\vtop{\centering{\footnotesize\itshape\@dedicatory\@@par}%
		\global\dimen@i\prevdepth}\prevdepth\dimen@i
	\fi
	\toks@\@xp{\shortauthors}\@temptokena\@xp{\shorttitle}%
	\edef\@tempa{\@nx\markboth{\the\toks@}{\the\@temptokena}}\@tempa
	\@setabstract
	\normalsize
	\if@titlepage
	\newpage
	\else
	\dimen@34\p@ \advance\dimen@-\baselineskip
	\vskip\dimen@\relax
	\fi
} % end \@maketitle
\renewcommand{\thanks}[1]{%
	\ifx\@empty\thankses
	\gdef\thankses{\thanks{#1}}%
	\else
	\g@addto@macro\thankses{\endgraf\thanks{#1}}%
	\fi}
\def\@setthanks{\def\thanks##1{\noindent##1\@addpunct.}\thankses}
\renewcommand{\curraddr}[2][]{%
	\ifx\@empty\addresses
	\gdef\addresses{\curraddr{#1}{#2}}%
	\else
	\g@addto@macro\addresses{\endgraf\curraddr{#1}{#2}}%
	\fi}
\renewcommand{\email}[2][]{%
	\ifx\@empty\addresses
	\gdef\addresses{\email{#1}{#2}}%
	\else
	\g@addto@macro\addresses{\endgraf\email{#1}{#2}}%
	\fi}
\renewcommand{\urladdr}[2][]{%
	\ifx\@empty\addresses
	\gdef\addresses{\urladdr{#1}{#2}}%
	\else
	\g@addto@macro\addresses{\endgraf\urladdr{#1}{#2}}%
	\fi}
\def\@setotheraddresses{%
	\def\curraddr##1##2{\noindent
		\emph{Current address\@ifnotempty{##1}{ of ##1}}:\space
		##2\@addpunct.}%
	\def\email##1##2{\noindent
		\emph{E-mail address\@ifnotempty{##1}{ of ##1}}:\space
		\texttt{##2}}%
	\def\urladdr##1##2{\noindent
		\emph{WWW address\@ifnotempty{##1}{ of ##1}}:\space
		\texttt{##2}}%
	\addresses
}
\let\enddoc@text\relax
\newtheorem{theorem}{Theorem}
\theoremstyle{plain}
\newtheorem{definition}{Definition}
\newtheorem{lemma}{Lemma}
\newtheorem{remark}{Remark}
\numberwithin{equation}{section}
\begin{document}
\title[Exponential-type Samling Operator]{Approximation Properties of Mellin-Steklov Type Exponential Sampling Series}
\author{Dilek Ozer}
\address{Selcuk University, Faculty of Science, Department of Mathematics, Selcuklu, 42003, Konya, Türkiye}
\email{dilekozer@yahoo.com}
\author{Sadettin Kursun}
\address{National Defence University, Turkish Military Academy, Department of Basic Sciences, Cankaya, 06420,
	Ankara, Türkiye}
\email{sadettinkursun@yahoo.com}
\author{Tuncer Acar}
\address{Selcuk University, Faculty of Science, Department of Mathematics, Selcuklu, 42003, Konya, Türkiye}
\email{tunceracar@ymail.com}
\keywords{Mellin-Steklov integrals, Exponential-type sampling series, high order of approximation, $L^p$ convergence, logarithmic weighted spaces of continuous functions}
\subjclass{41A25; 41A30; 41A35; 47A58}
\maketitle

\begin{abstract}
In this paper, we introduce Mellin-Steklov exponential sampling operators of order $r,r\in\mathbb{N}$, by considering appropriate Mellin-Steklov integrals. We investigate the approximation properties of these operators in continuous bounded spaces and $L^p,1\leq p<\infty$ spaces on $\mathbb{R}_+$. By using the suitable modulus of smoothness, it is given high order of approximation. Further, we present a quantitative Voronovskaja type theorem and we study the convergence results of newly constructed operators in logarithmic weighted spaces of functions. Finally, the paper provides some examples of kernels that support the our results.  
\end{abstract}

\section{Introduction}
 The classical sampling type operators attributed to Whittaker-Kotel'nikov-Shannon (see, \cite{Whittaker,Kotel'nikov,Shannon}) are defined by 
 \begin{equation}
 	\left(G_w f\right)\left(t\right):=\sum_{k\in\mathbb{Z}}f\left(\frac{k}{w}\right)\operatorname{sinc}\left(wt-k\right), \ w>0, t\in\mathbb{R}, \label{CSS}
 \end{equation}
where $\operatorname{sinc}$ function is given by $\operatorname{sinc}\left(t\right)=\frac{\sin\left(\pi t\right)}{\pi t},t\in\mathbb{R}\backslash\left\{0\right\}$ and $\operatorname{sinc}\left(0\right)=1$. We know that 
\begin{equation*}
	\sum_{k\in\mathbb{Z}}f\left(\frac{k}{w}\right)\operatorname{sinc}\left(wt-k\right)=f\left(t\right),
\end{equation*}
where $f:\mathbb{R}\rightarrow\mathbb{R}$ is a simultaneously band-limited signal. 

Later, P. L. Butzer et al. \cite{butzeretal} introduced a generalization of the operators \eqref{CSS} by using the kernel function $\varphi:\mathbb{R}\rightarrow\mathbb{R}$ which satisfies the certain assumptions instead of $\operatorname{sinc}$ function. The operators are in the form
\begin{equation}
	\left(G_w^\varphi\right)\left(t\right):=\sum_{k\in\mathbb{Z}}f\left(\frac{k}{w}\right)\varphi\left(wt-k\right), \ w>0,t\in\mathbb{R} \label{GSS}
\end{equation}
for any function $f:\mathbb{R}\rightarrow\mathbb{R}$ that makes the above series absolutely convergent on $\mathbb{R}$. In the same paper, the authors investigated approximation properties of the operators \eqref{GSS} in space of continuous functions on $\mathbb{R}$. 

It is known that the operators $G_w^\varphi$ are not suitable enough to approximate integrable functions $f:\mathbb{R}\rightarrow\mathbb{R}$ which are not continuous and we know that Kantorovich's idea allows for the approximation of not necessarily continuous functions. For this reason, Bardaro et al. \cite{bardaroetal1} introduced a Kantorovich variant of the family of operators \eqref{GSS}. The Kantorovich-type generalized sampling series is defined by 
\begin{equation}
	\left(K_w^\varphi f\right)\left(t\right):=\sum_{k\in\mathbb{Z}}\varphi\left(wt-k\right)\left[w\int_{\frac{k}{w}}^{\frac{k+1}{w}}f\left(y\right)dy\right], w>0, t\in\mathbb{R} \label{KTGSS}
\end{equation}
for locally integrable function defined on $\mathbb{R}$. These operators present an approximation method for functions which belong to $L^1$ spaces on $\mathbb{R}$. 

In order to present an approximation method for functions belonging to $L^p,1\leq p<\infty$ spaces, Bardaro and Mantellini \cite{bardaromantellini2} introduced Durrmeyer modification of \eqref{GSS}. Using a general convolution integral instead of integral means on the interval $\left[\frac{k}{w},\frac{k+1}{w}\right]$, the Durrmeyer-type generalized sampling series is given by 
\begin{equation}
	\left(G_w^{\varphi,\psi}f\right)\left(t\right):=\sum_{k\in\mathbb{Z}}\varphi\left(wt-k\right)\left[w\int_{\mathbb{R}}\psi\left(wu-k\right)f\left(u\right)du\right], w>0, t\in\mathbb{R}, \label{DTGSS}
\end{equation} 
where $\psi$ is a kernel function satisfying the certain assumptions such as $\varphi$.

 For other publications in the literature on this subject, see also \cite{bardaromantellini1,costarellietal,costarellisambucini,costarellivinti,cosvinti,bardaroetal,costarellietal2,cospiconivinti,acarbor,acarbor2,borislav,turgacar,alagoz}. Furthermore, for the approximation properties of generalized sampling operators and their different forms in weighted spaces of continuous functions (see, \cite{aeaws,aoactv,acarturgay, atap,ozerturgayacar}). 
 
 In 1980s,  a group which consists of physicists and engineers established the exponential form of the operatos \eqref{CSS} to solve the certain problems in optical physics, like light-scattering, diffraction, radio-astronomy and so on (see, \cite{berteropike,casasent,gori}). The classical exponential form of sampling series is given by
 \begin{equation}
 	\left(E_{c,w}f\right)\left(x\right):=\sum_{k\in\mathbb{Z}}f\left(e^{\frac{k}{w}}\right)\operatorname{lin}_{\frac{c}{w}}\left(e^{-k}x^w\right),\ c\in\mathbb{R}, w>0,x\in\mathbb{R}_+, \label{EFCSS}
 \end{equation}
where $\operatorname{lin}_c$ function is defined by $\operatorname{lin}_c\left(x\right)=\frac{t^{-c}}{2\pi i}\frac{x^{\pi}-x^{-\pi i}}{\log x},x\in\mathbb{R}_+\backslash\left\{1\right\}$ and $\operatorname{lin}_c\left(1\right)=1$. If $f$ is a Mellin band-limited signal, then we have $\left(E_{c,w}f\right)\left(x\right)=f\left(x\right)$ for every $x\in\mathbb{R}_+$ (see, \cite{butzerc}). 

For functions not necessarily Mellin band-limited, Bardaro et al. \cite{bardaroc} established a generalization of the exponential sampling series \eqref{EFCSS} by using the kernel function $\chi:\mathbb{R}_+\rightarrow\mathbb{R}$ satisfying the certain assumptions instead of $\operatorname{lin}_c$ function. The generalized exponential sampling series is defined by 
\begin{equation}
	\left(E_w^\chi f\right)\left(x\right):=\sum_{k\in\mathbb{Z}}f\left(e^{\frac{k}{w}}\right)\chi\left(e^{-k}x^w\right), \ w>0,x\in\mathbb{R}_+, \label{GESS}
\end{equation}
where $f:\mathbb{R}_+\rightarrow\mathbb{R}$ is a signal that makes the above series absolutely convergent on $\mathbb{R}_+$. In the same paper, the authors presented convergence results of the operators \eqref{GESS} in space of log-uniformly continuous functions on $\mathbb{R}_+$. 

Similar to the construction of the operators \eqref{KTGSS}, Angamuthu and Bajpeyi \cite{angamuthubajpeyi} introduced a Kantorovich form of exponential sampling operators. The Kantorovich-type exponential sampling operators are in the form 
\begin{equation}
	\left(I_w^{\chi}f\right)\left(x\right):=\sum_{k\in\mathbb{Z}}\chi\left(e^{-k}x^w\right)\left[w\int_{\frac{k}{w}}^{\frac{k+1}{w}}f\left(e^y\right)dy\right], w>0,x\in\mathbb{R}_+, \label{KTGESS}
\end{equation}
where $f$ is a locally integrable signal defined on $\mathbb{R}_+$. These operators an approximation method for functions belonging to $L^1$ spaces on $\mathbb{R}_+$. 

In order to present an approximation method for functions belonging to Mellin-Lebesgue ($X_c^p,c\in\mathbb{R},1\leq p<\infty$) spaces, Bardaro and Mantellini \cite{bardaromantellini} constructed a Durrmeyer modification of the operators \eqref{KTGESS} using a certain general convolution integral. The Durrmeyer-type exponential sampling operators are defined by 
\begin{equation}
	\left(I_w^{\chi,\Psi}f\right)\left(x\right):=\sum_{k\in\mathbb{Z}}\chi\left(e^{-k}x^w\right)\left[w\int_{\mathbb{R}_+}\Psi\left(e^{-k}u\right)f\left(u\right)\frac{du}{u}\right], w>0,x\in\mathbb{R}_+
\end{equation}
where $\Psi$ is a kernel function which satisfies the suitable assumptions such as $\chi$.

For other publications on the exponential sampling series and its different forms (see, \cite{bajpeyiangamuthu,balsamo,aak,bajpeyi,kursunaralacarr,kursunaralacar2,kursunacarr}). Moreover, for the approximation properties of generalized exponential sampling series and its different forms in logarithmic weighted spaces of continuous functions (see also \cite{acarkursun,acarekekursun,acarkursunacar}).

Very recently, Costarelli \cite{costarelli2024} has introduced a new form of the operators \eqref{GSS} named Steklov sampling operators of order $r,r\in\mathbb{N}$. In doing so, the author has considered the following Steklov-type integrals: 
\begin{equation}
	f_{r,h}\left(t\right):=\left(-h\right)^{-r}\int_{0}^{h}\cdot\int_{0}^{h}\sum_{m=1}^{r}\left(-1\right)^{r-m+1}{r\choose m}f\left(t+\frac{m}{r}\left(u_1+\dots+u_r\right)\right)du_1\dots du_r, \label{STI}
\end{equation}  
where $f:\mathbb{R}\rightarrow\mathbb{R}$ is a locally integrable function with $r\in\mathbb{N}$ and $h>0$. Considering the integrals \eqref{STI}, the Steklov sampling operators of order $r$ are defined by 
\begin{align}
	&\left(S_{w,r}^\varphi f\right)\left(t\right):=\sum_{k\in\mathbb{Z}}\varphi\left(wt-k\right) \label{SSO_r}\\
	&\notag \times\left[w^r\int_{0}^{\frac{1}{w}}\dots\int_{0}^{\frac{1}{w}}\sum_{m=1}^{r}\left(-1\right)^{1-m}{r\choose m}f\left(\frac{k}{w}+\frac{m}{r}\left(u_1+\dots+u_r\right)\right)du_1\dots du_r\right], \ w>0, t\in\mathbb{R} 
\end{align} 
for any locally integrable function $f:\mathbb{R}\rightarrow\mathbb{R}$ for
which the above series are convergent.

Our aim in this paper is to construct Mellin-Steklov exponential sampling operators of order $r,r\in\mathbb{N}$.  To do this, we first introduce the appropriate Mellin-Steklov integrals and mention their necessary properties, which are be used in the paper. Secondly, we present the convergence results of the newly constructed operators both in continuous function spaces and in $L^p,1\leq p<\infty$ spaces on $\mathbb{R}_+$. Moreover, we obtain the high order of approximation for these operators via suitable logarithmic modulus of continuity. Further, we give a quantitative Voronovskaja type theorem. Finally, we investigate the approximation properties of the present operators in logarithmic weighted spaces of functions.  
\section{Basic Notations and Auxiliary Results}

By $\mathbb{N}$ and $\mathbb{Z}$, let us denote the set of positive integers and integers, respectively. Furthermore, by $\mathbb{R}$ and $\mathbb{R}_{+}$, we denote the sets of all real and positive real numbers respectively.

Let $C\left( \mathbb{R}_{+}\right)$ be the space of all continuous functions defined on $\mathbb{R}_+$ and $CB\left( \mathbb{R}^{+}\right) $ is the
space of all bounded functions $f$ that belonging to $C\left(\mathbb{	R}_{+}\right) $. Let $\left\| \cdot\right\|_\infty $ stand for the sup-norm in $CB\left(\mathbb{R}_+\right)$. We say that a function $f\in C\left( \mathbb{R}
_{+}\right) $ is log -uniformly continuous on $\mathbb{R}_{+}$ if for
any $\varepsilon >0$ there exists  $\delta_\varepsilon >0$ such that $\left\vert
f\left( u\right) -f\left( v\right) \right\vert <\varepsilon $ whenever $
\left\vert \log u-\log v\right\vert \leq\delta $ for any $u,v\in \mathbb{R}
_{+} $. We know that in general a log-uniformly continuous function is not necessarily (usual) uniformly continuous function and conversely, but these notions are equivalent on every compact intervals of $\mathbb{R}_+$ (see, \cite{bardaroc}).  By $\mathcal{C}\left(\mathbb{R}_+\right)$, we denote the subspace of $CB\left(\mathbb{R}_+\right)$ consisting of all log-uniformly continuous functions. Also, we denote by $CB_{\text{comp}}\left( \mathbb{R}_{+}\right) $ the
subspace of $CB\left( \mathbb{R}_{+}\right) $ comprising all functions  with  compact supports in $\mathbb{R}_+$. 

Finally, we shall denote by $L^p\left(\mathbb{R}_+\right),1\leq p<\infty$ the usual Lebesgue spaces which contains all Lebesgue measurable functions such that 
\begin{equation*}
	\left\|f \right\|_p:=\left(\int_{\mathbb{R}_+}\left|f\left(x\right)\right|^p\frac{dx}{x}\right)^{\frac{1}{p}}.
\end{equation*}

Throughout this paper, a continuous function $\chi :\mathbb{R}_{+}\longrightarrow \mathbb{R}$
is called a kernel if the following assumptions hold:
\begin{itemize}
\item[$\left(\chi_1\right)$] $\chi$ in $L^1\left(\mathbb{R}_+\right)$ and it is bounded on $\left[\frac{1}{e},e\right]$;
\item[$\left(\chi_2\right)$] we have 
\begin{equation*}
	\sum_{k\in\mathbb{Z}}\chi\left(e^{-k}u\right)=1
\end{equation*}
for every $u\in\mathbb{R}_+$ and 
\begin{equation*}
	M_{0}\left( \chi \right) :=\sup_{u\in\mathbb{R}_+}\sum_{k\in\mathbb{Z}}\left\vert \chi \left( e^{-k}u\right) \right\vert
	<+\infty
\end{equation*}
\item[$\left(\chi_3\right)$] there exists $\gamma >0$ such that
\begin{equation*}
	\lim_{\gamma\rightarrow\infty}\sum_{\left|k-\log u\right|>\gamma}\left|\chi\left(e^{-k}u\right)\right|=0
\end{equation*}
uniformly with respect to $u\in\mathbb{R}_+$. 
\end{itemize}
The class of functions satisfying the assumptions $\left( \chi _1\right), \left( \chi _2\right) $ and $\left(\chi_3\right)$ will be denoted by $\phi .$

For $i\in \mathbb{N}$, the algebraic moments of order $i$ of a kernel $\chi
\in \phi $ are defined by
\begin{equation*}
	m_{i}\left( \chi ,u\right) :=\underset{k\in \mathbb{Z}}{\sum }\chi \left(
	e^{-k}u\right) \left( k-\log u\right) ^{i},\text{ \ }u\in \mathbb{R}^{+}.
\end{equation*}

and the absolute moments of order $\alpha>0$ of a kernel $\chi\in\phi$ are given by
\begin{equation*}
	M_{\alpha }\left( \chi \right) :=\sup_{u\in\mathbb{R}_+}\sum_{k\in\mathbb{Z}}%
	\left\vert \chi \left( e^{-k}u\right) \right\vert \left\vert k-\log
	u\right\vert ^{\alpha }.
\end{equation*}
\begin{remark}
	It is shown in \cite{barmansch} that if $M_{\alpha }\left( \chi \right) <\infty $ we also have $M_{\beta }\left( \chi \right) <\infty $ for $0<\beta <\alpha .$
\end{remark}
We need the following Lemma when giving the convergence result in $L^p$. 
\begin{lemma}(\cite{barmantit})\label{Lemma1}
	For every $\gamma
	>0$ and $\varepsilon>0$,  there is a constant $M>0$ such that
	\begin{equation*}
		\int_{x\not\in\left[e^{-M},e^M\right]}w\left\vert \chi
		\left( e^{-k}x^{w}\right) \right\vert \frac{dx}{x}<\varepsilon
	\end{equation*}
	for sufficiently large $w$ and $k\in \mathbb{Z}$ such that $\frac{k}{w}\in %
	\left[ -\gamma ,\gamma \right]$.
\end{lemma}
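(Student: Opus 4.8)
The plan is to reduce the logarithmically weighted tail integral to the tail of an ordinary $L^1$ function and then invoke absolute continuity of the Lebesgue integral. First I would make the substitution $t=e^{-k}x^{w}$, for which $\log t=-k+w\log x$ and therefore $w\,\frac{dx}{x}=\frac{dt}{t}$. The condition $x\notin[e^{-M},e^{M}]$, i.e. $\log x\notin[-M,M]$, becomes $\log t\notin[-wM-k,\,wM-k]$, and the argument of $\chi$ is simply $t$. Hence
\begin{equation*}
	\int_{x\notin[e^{-M},e^{M}]}w\left|\chi\left(e^{-k}x^{w}\right)\right|\frac{dx}{x}=\int_{\log t\,\notin\,[-wM-k,\,wM-k]}\left|\chi\left(t\right)\right|\frac{dt}{t}.
\end{equation*}

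Next I would use the hypothesis $\frac{k}{w}\in[-\gamma,\gamma]$, i.e. $|k|\le\gamma w$, to make the right-hand side independent of $k$. Indeed, if $\log t>wM-k$ then $\log t>wM-\gamma w=w(M-\gamma)$, and if $\log t<-wM-k$ then $\log t<-wM+\gamma w=-w(M-\gamma)$; so on the domain of integration one has $|\log t|>w(M-\gamma)$ provided $M>\gamma$. Choosing, say, $M:=\gamma+R$ with $R>0$ to be fixed, this yields the uniform (in $k$) estimate
\begin{equation*}
	\int_{x\notin[e^{-M},e^{M}]}w\left|\chi\left(e^{-k}x^{w}\right)\right|\frac{dx}{x}\le\int_{|\log t|>wR}\left|\chi\left(t\right)\right|\frac{dt}{t}.
\end{equation*}

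Finally, since $(\chi_{1})$ gives $\chi\in L^{1}(\mathbb{R}_{+})$, the set function $A\mapsto\int_{A}|\chi(t)|\,\frac{dt}{t}$ is absolutely continuous, so by dominated convergence $\int_{|\log t|>\rho}|\chi(t)|\,\frac{dt}{t}\to0$ as $\rho\to\infty$; pick $R$ so that this tail is below $\varepsilon$ already for $\rho=R$. Then for every $w\ge1$ one has $wR\ge R$, whence the bound is $<\varepsilon$, which is the assertion (alternatively one fixes any $M>\gamma$ and lets $w$ be large enough that $w(M-\gamma)\ge R$). There is no genuine obstacle here: the lemma is a routine change of variables combined with a tail estimate, and the only point meriting attention is the uniformity in $k$ — one must observe that $\frac{k}{w}\in[-\gamma,\gamma]$ forces $|k|\lesssim\gamma w$, which is negligible against the threshold $wM$ once $M>\gamma$, and this is precisely what decouples the estimate from $k$ so that a single $M$ and a single lower bound on $w$ work for all admissible $k$ simultaneously.
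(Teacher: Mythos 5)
Your proof is correct: the change of variables $t=e^{-k}x^{w}$ (so that $w\,\frac{dx}{x}=\frac{dt}{t}$), the use of $|k|\le\gamma w$ to absorb the shift $-k$ into a $k$-independent threshold $w(M-\gamma)$, and the tail estimate for the $L^{1}(\mathbb{R}_{+})$ function $\chi$ guaranteed by $(\chi_{1})$ together yield the claim with the required uniformity in $k$. The paper itself states this lemma without proof, citing \cite{barmantit}, so there is no in-paper argument to compare against; your argument is the standard one and is complete.
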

Now, we define the logarithmic modulus of smoothness of order $m\in\mathbb{N}$ for $f\in CB\left(\mathbb{R}_+\right)$ and $\delta>0$ as follows: 
\begin{equation}
	\omega_m\left(f,\delta\right):=\sup_{0<\log h\leq \delta}\left\|\Delta_h^m f \right\|, \label{LMOS}
\end{equation}
where $\Delta_hf\left(x\right):=f\left(xh\right)-f\left(x\right),\ x,h\in\mathbb{R}_+$ and $\Delta_h^m:=\Delta_h\left(\Delta_h^{m-1}\right)$. The expanded form of $\Delta_h^m f$ is in the form
\begin{equation*}
	\Delta_h^mf\left(x\right):=\sum_{j=0}^{m}\left(-1\right)^{m-j}{m \choose j}f\left(xh^j\right),\ x\in\mathbb{R}_+.
\end{equation*} 
The logarithmic modulus of smoothness given in \eqref{LMOS} has the following properties.
\begin{lemma}
Let $\delta>0$. Then 
\begin{itemize}
\item[a)]for $f\in \mathcal{C}\left(\mathbb{R}_+\right)$, $\displaystyle\lim_{\delta\rightarrow0}\omega_m\left(f,\delta\right)=0$,
\item[b)] for any $\lambda>0$, $\omega_m\left(f,\lambda\delta\right)\leq\left(\lambda+1\right)^m\omega_m\left(f,\delta\right)$
\end{itemize}
hold.
\begin{proof}
	The proof can be given similarly to the classical modulus of smoothness given in \cite{devorelorentz}. 
\end{proof}
\end{lemma}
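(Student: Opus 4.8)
The plan is to transfer both assertions to the classical (additive) modulus of smoothness on $\mathbb{R}$ via the substitution $g(t):=f(e^{t})$, $t\in\mathbb{R}$, and then invoke the corresponding well-known facts, e.g.\ from \cite{devorelorentz}. Writing $x=e^{t}$ and $h=e^{s}$ with $s>0$, one has $xh^{j}=e^{t+js}$, so that
\[
\Delta_{h}^{m}f(x)=\sum_{j=0}^{m}(-1)^{m-j}\binom{m}{j}f\big(xh^{j}\big)
=\sum_{j=0}^{m}(-1)^{m-j}\binom{m}{j}g(t+js)=\big(\widetilde{\Delta}_{s}^{m}g\big)(t),
\]
where $\widetilde{\Delta}_{s}g(t):=g(t+s)-g(t)$ is the ordinary forward difference. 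Since $\log h=s$ ranges over $(0,\delta]$ precisely when $h$ ranges over the admissible set, taking suprema over $x\in\mathbb{R}_{+}$ (equivalently over $t\in\mathbb{R}$) gives $\omega_{m}(f,\delta)=\widetilde{\omega}_{m}(g,\delta)$, the classical $m$-th order modulus of smoothness of $g$. Moreover $f\in CB(\mathbb{R}_{+})$ exactly when $g$ is bounded, and $f$ is log-uniformly continuous on $\mathbb{R}_{+}$ exactly when $g$ is uniformly continuous on $\mathbb{R}$; hence $f\in\mathcal{C}(\mathbb{R}_{+})$ corresponds to $g$ being bounded and uniformly continuous.

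With this dictionary in hand, part (a) follows at once: for bounded uniformly continuous $g$ one has $\widetilde{\omega}_{m}(g,\delta)\le 2^{m-1}\widetilde{\omega}_{1}(g,\delta)\to 0$ as $\delta\to 0$ — the inequality by iterating $\|\widetilde{\Delta}_{s}^{m}g\|_{\infty}\le 2\|\widetilde{\Delta}_{s}^{m-1}g\|_{\infty}$, and the limit directly from the $\varepsilon$–$\delta$ definition of uniform continuity — and this transfers back to $\lim_{\delta\to0}\omega_{m}(f,\delta)=0$. For part (b) I would use the finite-difference composition identity
\[
\widetilde{\Delta}_{ns}^{m}g(t)=\sum_{i_{1}=0}^{n-1}\cdots\sum_{i_{m}=0}^{n-1}\widetilde{\Delta}_{s}^{m}g\big(t+(i_{1}+\cdots+i_{m})s\big),\qquad n\in\mathbb{N},
\]
which yields $\widetilde{\omega}_{m}(g,n\delta)\le n^{m}\widetilde{\omega}_{m}(g,\delta)$; then, for arbitrary $\lambda>0$, put $n:=\lfloor\lambda\rfloor+1$, so that $\lambda\le n\le\lambda+1$, and estimate $\widetilde{\omega}_{m}(g,\lambda\delta)\le\widetilde{\omega}_{m}(g,n\delta)\le n^{m}\widetilde{\omega}_{m}(g,\delta)\le(\lambda+1)^{m}\widetilde{\omega}_{m}(g,\delta)$, transferring back through the dictionary once more. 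The identity itself is cleanest to prove by telescoping, $\widetilde{\Delta}_{ns}=\sum_{i=0}^{n-1}\widetilde{\Delta}_{s}\circ S_{is}$ (with $S_{a}$ the shift by $a$), and then raising to the $m$-th power using that the operators $\widetilde{\Delta}_{s}\circ S_{is}$ commute; alternatively a short induction on $n$ works.

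If one prefers to remain on $\mathbb{R}_{+}$, the same argument runs verbatim after recording the multiplicative form of the identity, $\Delta_{h^{n}}^{m}f(x)=\sum_{i_{1},\dots,i_{m}=0}^{n-1}\Delta_{h}^{m}f\big(xh^{i_{1}+\cdots+i_{m}}\big)$, together with the observation that $\omega_{1}(f,\delta)\to0$ is immediate from log-uniform continuity, since $|\log(xh)-\log x|=\log h\le\delta$ whenever $0<\log h\le\delta$. The only point requiring genuine care is the composition identity (in either its additive or multiplicative guise) and the attendant index bookkeeping in the rescaling step; everything else is routine, so I do not anticipate a real obstacle.
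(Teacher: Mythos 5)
Your proposal is correct and follows exactly the route the paper intends: the paper's proof is just the remark that the argument mirrors the classical modulus of smoothness in DeVore--Lorentz, and your substitution $g(t)=f(e^{t})$ makes that reduction explicit, after which the subadditivity estimate for (a) and the composition identity plus $n=\lfloor\lambda\rfloor+1$ for (b) are the standard classical proofs. No gaps; the dictionary between log-uniform continuity of $f$ and uniform continuity of $g$ is exactly what justifies the transfer.
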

In order to construct Mellin-Steklov type exponential sampling operators, we finally introduce Mellin-Steklov integrals. The Mellin-Steklov integrals of order $r\in\mathbb{N}$ with $h>1$ are defined by 
\begin{equation}
		F_{r,h}\left( x\right) :=\left( -\log h\right) ^{-r}\underset{1}{\overset{h}{%
			\int }}...\underset{1}{\overset{h}{\int }}\sum_{m=1}^{r}\left( -1\right) ^{r-m+1}\binom{r}{m}f\left( x\left(
	t_{1}...t_{r}\right) ^{\frac{m}{r}}\right) \frac{dt_{1}}{t_{1}}...\frac{
		dt_{r}}{t_{r}}, \ x\in\mathbb{R}_+ \label{MSI}
\end{equation}
for any locally integrable function $f:\mathbb{R}
_{+}\longrightarrow \mathbb{R}$. If $h=e^{\frac{1}{w}}$, we can write that
\begin{equation*}
	F_{r,e^{\frac{1}{w}}}\left( e^{\frac{k}{w}}\right) :=w^{r}\underset{1}{%
		\overset{e^{\frac{1}{w}}}{\int }}...\underset{1}{\overset{e^{\frac{1}{w}}}{%
			\int }}\sum_{m=1}^{r}\left( -1\right) ^{1-m}%
	\binom{r}{m}f\left( e^{\frac{k}{w}}\left( t_{1}...t_{r}\right) ^{\frac{m}{r}%
	}\right) \frac{dt_{1}}{t_{1}}...\frac{dt_{r}}{t_{r}}.
\end{equation*}
\begin{remark}
	For any locally integrable function $f:\mathbb{R}_
	{+}\longrightarrow \mathbb{R}$, with $r\in \mathbb{N}$ and $h>1$, we have
	\begin{eqnarray*}
		F_{r,h}\left( x\right) -f\left( x\right) &=&\frac{1}{\left( -1\right)
			^{r}\left( \log h\right) ^{r}}\underset{1}{\overset{h}{\int }}...\underset{1}
		{\overset{h}{\int }}\sum_{m=1}^{r}\left(
		-1\right) ^{r-m+1}\binom{r}{m}f\left( x\left( t_{1}...t_{r}\right) ^{\frac{m}{r}}\right) \frac{dt_{1}}{t_{1}}...\frac{dt_{r}}{t_{r}} \\
		&-&\frac{1}{\left( -1\right) ^{r}\left( \log h\right) ^{r}}\underset{1}{%
			\overset{h}{\int }}...\underset{1}{\overset{h}{\int }}\left( -1\right)
		^{r}f\left( x\right) \frac{dt_{1}}{t_{1}}...\frac{dt_{r}}{t_{r}} \\
		 &=&\frac{-1}{\left( -1\right) ^{r}\left( \log h\right) ^{r}}\underset{1}{%
			\overset{h}{\int }}...\underset{1}{\overset{h}{\int }}\sum_{m=0}^{r}\left( -1\right) ^{r-m}\binom{r}{m}f\left( x\left(
		t_{1}...t_{r}\right) ^{\frac{m}{r}}\right) \frac{dt_{1}}{t_{1}}...\frac{%
			dt_{r}}{t_{r}} \\
		&=&\frac{-1}{\left( -1\right) ^{r}\left( \log h\right) ^{r}}\underset{1}{%
			\overset{h}{\int }}...\underset{1}{\overset{h}{\int }}\Delta _{\left(
			t_{1}...t_{r}\right) ^{\frac{1}{r}}}^{r}f\left( x\right) \frac{dt_{1}}{t_{1}}
		...\frac{dt_{r}}{t_{r}}.
	\end{eqnarray*}
\end{remark}
\section{Construction of the operators and their Convergence results}
In this section, we first introduce the Mellin-Steklov exponential sampling operators by using the integrals given in \eqref{MSI}. Later, we give pointwise and uniform convergence results of these operators. Finally, we present $L^p,1\leq p<\infty$ convergence result for these operators.  

We are able to introduce the following. 
\begin{definition}
	Let $r\in \mathbb{N}$ be fixed. The Mellin-Steklov exponential sampling
	operators of order $r$ are defined by 
	\begin{align}
		\left( E_{w,r}^{\chi }f\right) \left( x\right) :=&\sum_{k\in\mathbb{Z}}F_{r,e^{\frac{1}{w}}}\left( e^{\frac{k}{w}
		}\right) \chi \left( e^{-k}x^{w}\right) \label{MSESO} \\
		=&\sum_{k\in\mathbb{Z}}\chi \left(
		e^{-k}x^{w}\right)\left\{ w^{r}\underset{1}{
			\overset{e^{\frac{1}{w}}}{\int }}...\underset{1}{\overset{e^{\frac{1}{w}}}{
				\int }}\sum_{m=1}^{r}\left( -1\right) ^{1-m}
		\binom{r}{m}f\left( e^{\frac{k}{w}}\left( t_{1}...t_{r}\right) ^{\frac{m}{r}
		}\right) \frac{dt_{1}}{t_{1}}...\frac{dt_{r}}{t_{r}}\right\}, \ x \in\mathbb{R}_+,w>0 \notag
	\end{align}
	for any locally integrable function $f:\mathbb{R}_{+}\rightarrow 
	\mathbb{R}$ for which the above series are convergent.
\end{definition}
\begin{remark}
	In \eqref{MSESO}, if we consider the situation $r=1$, we get the classical
	Kantorovich forms of exponential sampling series given in \eqref{KTGESS}.
\end{remark}
The above sampling-type series are well-defined for every $r\in\mathbb{N}$ and $w>0$, assuming, for example, that the functions $f:\mathbb{R}_+\rightarrow\mathbb{R}$ are bounded. We can easily that
	\begin{align}
		\left\vert \left( E_{w,r}^{\chi }f\right) \left( x\right) \right\vert
		&=\left\vert \sum_{k\in\mathbb{Z}}\chi \left( e^{-k}x^{w}\right)\left\{ w^{r}
		\underset{1}{\overset{e^{\frac{1}{w}}}{\int }}...\underset{1}{\overset{e^{
					\frac{1}{w}}}{\int }}\sum_{m=1}^{r}\left(
		-1\right) ^{1-m}\binom{r}{m}f\left( e^{\frac{k}{w}}\left(
		t_{1}...t_{r}\right) ^{\frac{m}{r}}\right) \frac{dt_{1}}{t_{1}}...\frac{
			dt_{r}}{t_{r}}\right\} \right\vert \notag\\
		&\leq\left\Vert f\right\Vert _{\infty }\left( 2^{r}-1\right) \sum_{k\in\mathbb{Z}}\left\vert \chi \left(
		e^{-k}x^{w}\right) \right\vert\left\{ w^{r}\underset{1}{\overset{e^{\frac{1}{w
			}}}{\int }}...\underset{1}{\overset{e^{\frac{1}{w}}}{\int }}\frac{dt_{1}}{
			t_{1}}...\frac{dt_{r}}{t_{r}}\right\} \notag \\
		&\leq
		\left\Vert f\right\Vert _{\infty }\left( 2^{r}-1\right) M_{0}\left( \chi
		\right). \label{ineq3.2}
	\end{align}
Now, we present the pointwise and uniform convergence results. 
\begin{theorem}\label{theorem1}
	Let $\chi\in\phi$ be a kernel. Then for every bounded function $f:\mathbb{R}_+\rightarrow\mathbb{R}$,
	\begin{equation*}
		\lim_{w\rightarrow\infty}\left( E_{w,r}^{\chi}f\right)
		\left( x\right) =f\left( x\right)
	\end{equation*}
	holds at any point of continuity $x\in \mathbb{R}_{+}$ of the function $f.$ Furthermore, if 
	$ \mathcal{C}\left( \mathbb{R}_{+}\right) $ we have
	\begin{equation*}
		\lim_{w\rightarrow\infty}\left\Vert E_{w,r}^{\chi}f-f\right\Vert _{\infty }=0.
	\end{equation*}
\end{theorem}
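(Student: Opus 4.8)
The plan is to linearize the difference $E_{w,r}^{\chi}f-f$ by means of $(\chi_2)$ and then to control it by splitting the sampling sum into a ``central'' part, handled by the (log\nobreakdash-)continuity of $f$, and a ``tail'' part, handled by $(\chi_3)$. First I would record that, by the estimate \eqref{ineq3.2}, boundedness of $f$ guarantees absolute convergence of the series defining $E_{w,r}^{\chi}f$; since $\sum_{m=1}^{r}(-1)^{1-m}\binom{r}{m}=1$ and $\sum_{k\in\mathbb{Z}}\chi(e^{-k}x^{w})=1$ by $(\chi_2)$, I may rearrange and write
\[
\left(E_{w,r}^{\chi}f\right)(x)-f(x)=\sum_{k\in\mathbb{Z}}\chi\!\left(e^{-k}x^{w}\right)w^{r}\!\int_{1}^{e^{1/w}}\!\!\!\cdots\!\int_{1}^{e^{1/w}}\sum_{m=1}^{r}(-1)^{1-m}\binom{r}{m}\Bigl[f\!\bigl(e^{k/w}(t_{1}\cdots t_{r})^{m/r}\bigr)-f(x)\Bigr]\frac{dt_{1}}{t_{1}}\cdots\frac{dt_{r}}{t_{r}}.
\]
One could instead subtract $f(e^{k/w})$ and invoke the Remark after \eqref{MSI}, but the form above treats the pointwise and the uniform cases simultaneously.

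The key elementary estimate is that, for $t_{i}\in[1,e^{1/w}]$ and $1\le m\le r$, one has $0\le \tfrac{m}{r}\sum_{i=1}^{r}\log t_{i}\le \tfrac{r}{w}$, hence $\bigl|\log\bigl(e^{k/w}(t_{1}\cdots t_{r})^{m/r}\bigr)-\log x\bigr|\le \bigl|\tfrac{k}{w}-\log x\bigr|+\tfrac{r}{w}$. Now fix $\varepsilon>0$. For the first assertion, continuity of $f$ at $x$ (equivalently, log\nobreakdash-continuity at $x$, since the two notions agree locally) provides $\delta>0$ with $|f(u)-f(x)|<\varepsilon$ whenever $|\log u-\log x|\le\delta$; for the second assertion, $f\in\mathcal{C}(\mathbb{R}_+)$ is log\nobreakdash-uniformly continuous, so the same $\delta$ serves every $x$. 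Taking $w$ so large that $r/w<\delta/2$ and splitting $\mathbb{Z}$ according to whether $|k/w-\log x|\le\delta/2$ or not, the displacement estimate together with the choice of $\delta$ shows that the ``central'' indices contribute at most $(2^{r}-1)\,M_{0}(\chi)\,\varepsilon$, while — bounding each bracketed difference by $2\|f\|_{\infty}$ — the ``tail'' indices contribute at most $2(2^{r}-1)\|f\|_{\infty}\sum_{|k-w\log x|>w\delta/2}\bigl|\chi(e^{-k}x^{w})\bigr|$.

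It remains to let $w\to\infty$. Since $w\delta/2\to\infty$ and, by $(\chi_3)$, $\sum_{|k-\log u|>\gamma}|\chi(e^{-k}u)|\to0$ as $\gamma\to\infty$ uniformly in $u\in\mathbb{R}_+$, the tail contribution, which is evaluated at $u=x^{w}$, tends to $0$ (uniformly in $x$ in the second case). Hence $\limsup_{w\to\infty}\bigl|(E_{w,r}^{\chi}f)(x)-f(x)\bigr|\le (2^{r}-1)M_{0}(\chi)\varepsilon$, and since $\varepsilon>0$ was arbitrary both claims follow. The only genuinely delicate point is this last step: in the tail estimate the kernel is sampled at $u=x^{w}$, a point that escapes to $0$ or $\infty$ with $w$, so one truly needs the \emph{uniform} (in $u$) decay built into $(\chi_3)$ rather than mere pointwise decay — and this is also exactly what supplies the uniformity in $x$ for the statement on $\mathcal{C}(\mathbb{R}_+)$.
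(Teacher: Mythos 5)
Your proposal is correct and follows essentially the same route as the paper: linearize via the partition of unity in $(\chi_2)$, split the sampling sum at $\left|k-w\log x\right|\leq\frac{w\delta}{2}$, control the central part by (log-)continuity and $M_0(\chi)$, and kill the tail with the uniform decay in $(\chi_3)$ applied at $u=x^w$ with $\gamma=\frac{w\delta}{2}\rightarrow\infty$. The one place you diverge is the central estimate: the paper telescopes the alternating binomial sum via Pascal's rule so that every difference of $f$-values involves points at logarithmic distance at most $\frac{1}{w}$, whereas you bound each of the $2^r-1$ terms directly after enlarging $w$ so that $\frac{r}{w}<\frac{\delta}{2}$; since $r$ is fixed this is equally valid, slightly simpler, and costs only an irrelevant constant ($2^r-1$ in place of $2^{r-1}$).
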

\begin{proof}
Let us start the first part of the theorem. Using the continuity of the function $f$ at the point $x\in\mathbb{R}_+$, we know that  for every $\varepsilon
	>0$ there exists $\delta >0$ such that $\left\vert f\left( x\right)
	-f\left( y\right) \right\vert <\varepsilon $ for any $y\in \mathbb{R}_{+}$
	for which $\left\vert \log x-\log y\right\vert \leq \delta $. Now, using the
	condition $\left( \chi _{1}\right)$, we can write what follows:
	\begin{align*}
		\left( E_{w}^{\chi ,r}f\right) \left( x\right) -f\left( x\right) &=\sum_{k\in\mathbb{Z}}F_{r,e^{\frac{1}{w}}}\left( e^{\frac{k}{w}}\right)
		\chi \left( e^{-k}x^{w}\right) -f\left( x\right) \sum_{k\in\mathbb{Z}}\chi \left( e^{-k}x^{w}\right) \\
		&=\sum_{k\in\mathbb{Z}}\chi
		\left( e^{-k}x^{w}\right)\left\{ w^{r}\underset{1}{\overset{e^{%
					\frac{1}{w}}}{\int }}...\underset{1}{\overset{e^{\frac{1}{w}}}{\int }}\left[ 
		\sum_{m=1}^{r}\left( -1\right) ^{1-m}\binom{r}{m}%
		f\left( e^{\frac{k}{w}}\left( t_{1}...t_{r}\right) ^{\frac{m}{r}}\right)
		-f\left( x\right) \right]\frac{dt_{1}}{t_{1}}...\frac{dt_{r}}{t_{r}}\right\} \\
		&=\left( \underset{\left\vert k-w\log x\right\vert \leq \frac{w\delta }{2}}{%
			\sum }+\underset{\left\vert k-w\log x\right\vert >\frac{w\delta }{2}}{\sum }
		\right) \chi \left( e^{-k}x^{w}\right) w^{r}\underset{1}{\overset{e^{\frac{1}{w}}}{\int }}...\underset{1}{%
			\overset{e^{\frac{1}{w}}}{\int }} \\
		&\times \left[ \sum_{m=1}^{r}\left( -1\right)
		^{1-m}\binom{r}{m}f\left( e^{\frac{k}{w}}\left( t_{1}...t_{r}\right) ^{\frac{%
				m}{r}}\right) -f\left( x\right) \right] \frac{dt_{1}}{t_{1}}...\frac{dt_{r}}{
			t_{r}}\\
		&=:E_{1}+E_{2}.
	\end{align*}
	Now we first estimate $E_{1}.$ Note that, if the integer $k$ is such that $%
	\left\vert k-w\log x\right\vert \leq \frac{w\delta }{2}$, for every $
	t_{j}\in \left[ 1,e^{\frac{1}{w}}\right] ,j=1,2,...,r$, we can write what follows:
	\begin{eqnarray*}
		\left\vert \log \left(e^{\frac{k}{w}}\left( t_{1}...t_{r}\right) ^{\frac{1}{r}%
		}\right)-\log x\right\vert 
		&\leq&\left\vert \frac{k}{w}-\log x\right\vert +\frac{1}{r}\left\vert \log
		\left( t_{1}...t_{r}\right) \right\vert \\
		&\leq &\frac{\delta }{2}+\frac{1}{w} \\
		&\leq &\delta
	\end{eqnarray*}
	for sufficiently large $w>0$ and moreover,
	\begin{eqnarray*}
		\left\vert \log \left(e^{\frac{k}{w}}\left( t_{1}...t_{r}\right) ^{\frac{m}{r}
		}\right)-\log \left(e^{\frac{k}{w}}\left( t_{1}...t_{r}\right) ^{\frac{m-1}{r}
		}\right)\right\vert&=&\left\vert \frac{m}{r}\log \left( t_{1}...t_{r}\right) -\frac{m-1}{r}\log
		\left( t_{1}...t_{r}\right) \right\vert \\
		&=&\left\vert \log \left( t_{1}...t_{r}\right) \left( \frac{m}{r}-\frac{m}{r}%
		+\frac{1}{r}\right) \right\vert \\
		&\leq &\frac{1}{w} \\
		&\leq &\delta
	\end{eqnarray*}
	
	for sufficiently large $w>0$, for every $m=2,3,...,r$. Thus, we get 
	\begin{align*}
		&\sum_{m=1}^{r}\left( -1\right) ^{1-m}\binom{r}{%
			m}f\left( e^{\frac{k}{w}}\left( t_{1}...t_{r}\right) ^{\frac{m}{r}}\right)
		-f\left( x\right) \\
		&=\sum_{m=1}^{r-1}\left( -1\right) ^{1-m}\left[ \binom{r-1}{m}+\binom{r-1}{%
			m-1}\right] f\left( e^{\frac{k}{w}}\left( t_{1}...t_{r}\right) ^{\frac{m}{r}%
		}\right) +\left( -1\right) ^{1-r}f\left( e^{\frac{k}{w}}\left(
		t_{1}...t_{r}\right) \right) -f\left( x\right)\\
		&=\sum_{m=1}^{r-2}\left( -1\right) ^{-m}\binom{r-1}{m}\left[ f\left( e^{%
			\frac{k}{w}}\left( t_{1}...t_{r}\right) ^{\frac{m+1}{r}}\right) -f\left( e^{%
			\frac{k}{w}}\left( t_{1}...t_{r}\right) ^{\frac{m}{r}}\right) \right] \\
		&+\left( -1\right) ^{1-r}\left[ f\left( e^{\frac{k}{w}}\left(
		t_{1}...t_{r}\right) \right) -f\left( e^{\frac{k}{w}}\left(
		t_{1}...t_{r}\right) ^{\frac{r-1}{r}}\right) \right]+\left[ f\left( e^{\frac{k}{w}}\left( t_{1}...t_{r}\right) ^{\frac{1}{r}%
		}\right) -f\left( x\right) \right].
	\end{align*}
Then we have 
	\begin{eqnarray*}
		\left\vert E_{1}\right\vert&\leq &\varepsilon \underset{\left\vert k-w\log x\right\vert \leq \frac{
				w\delta }{2}}{\sum }\left[ w^{r}\underset{1}{\overset{e^{\frac{1}{w}}}{\int }%
		}...\underset{1}{\overset{e^{\frac{1}{w}}}{\int }}\sum_{m=0}^{r-1}\binom{r-1%
		}{m}\frac{dt_{1}}{t_{1}}...\frac{dt_{r}}{t_{r}}\right] \left\vert \chi
		\left( e^{-k}x^{w}\right) \right\vert \\
		&\leq &2^{r-1}M_{0}\left( \chi \right) \varepsilon.
	\end{eqnarray*}
	On the other hand, we obtain 
	\begin{eqnarray*}
		\left\vert E_{2}\right\vert &\leq &\left( 2^{r}-1\right) 2\left\Vert f\right\Vert _{\infty }\underset{%
			\left\vert k-w\log x\right\vert >\frac{w\delta }{2}}{\sum }\left\vert \chi
		\left( e^{-k}x^{w}\right) \right\vert \\
		&<&\left( 2^{r+1}-2\right) \left\Vert f\right\Vert _{\infty }\varepsilon
	\end{eqnarray*}
	for sufficiently large $w>0$. This completes the first part of the proof. The second part of the theorem follows immediately by replacing the parameter $\delta >0$ used for the continuity of $f$ with the corresponding one for the uniform continuity of $f$, and by observing that all the aforementioned estimates hold uniformly for all $x \in \mathbb{R}_{+}$. This concludes the proof of the second part of the theorem.
\end{proof}

Now, we present $L^p,1\leq p<\infty$ convergence result for our operators. To do this, we begin with the following auxiliary results. First of all, from Theorem \ref{theorem1}, we have the following immediately.
\begin{theorem}\label{theorem2}
	Let $\chi\in\phi$ be a kernel and let $f\in CB_{\text{comp}}\left(\mathbb{R}_+\right)$ be fixed. Then:
		\begin{equation*}
		\lim_{w\rightarrow\infty}\left\Vert E_{w,r}^{\chi}f-f\right\Vert _{\infty }=0
	\end{equation*}
holds. 
\end{theorem}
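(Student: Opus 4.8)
The plan is to obtain Theorem~\ref{theorem2} as an immediate corollary of the uniform convergence assertion in Theorem~\ref{theorem1}; the only thing that really needs to be checked is the inclusion $CB_{\text{comp}}(\mathbb{R}_+)\subseteq\mathcal{C}(\mathbb{R}_+)$, after which applying the second part of Theorem~\ref{theorem1} to $f$ finishes the argument.

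First I would fix $f\in CB_{\text{comp}}(\mathbb{R}_+)$ and pick a compact interval $[a,b]\subset\mathbb{R}_+$ with $0<a<b$ containing $\operatorname{supp}f$, so that $f\equiv 0$ on $\mathbb{R}_+\setminus[a,b]$, and then pass to the slightly larger compact interval $I:=[a/2,2b]\subset\mathbb{R}_+$. On $I$ the function $f$ is uniformly continuous in the ordinary sense by the Heine--Cantor theorem, and since ordinary uniform continuity and log-uniform continuity coincide on compact subintervals of $\mathbb{R}_+$ (see \cite{bardaroc}), $f$ is log-uniformly continuous on $I$; let $\delta_I(\varepsilon)$ denote a corresponding modulus.

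Next, to promote this to log-uniform continuity on all of $\mathbb{R}_+$, I would fix $\varepsilon>0$ and carry out a short case distinction for $u,v\in\mathbb{R}_+$ with $|\log u-\log v|\le\delta$: if neither $u$ nor $v$ lies in $[a,b]$ then $f(u)=f(v)=0$; if at least one of them lies in $[a,b]$, then choosing $\delta<\log 2$ (a threshold depending only on $a$ and $b$) forces both $u$ and $v$ into $I$, where $|f(u)-f(v)|<\varepsilon$ provided also $\delta\le\delta_I(\varepsilon)$. Taking $\delta:=\min\{\log 2,\delta_I(\varepsilon)\}$ then gives $|f(u)-f(v)|<\varepsilon$ whenever $|\log u-\log v|\le\delta$, that is, $f\in\mathcal{C}(\mathbb{R}_+)$.

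No genuine obstacle arises here; the only point requiring a line of care is the ``seam'' case above, namely that a point of $[a,b]$ together with a log-nearby point cannot leave the enlarged interval $I$, which is elementary once the fixed cushion between $[a,b]$ and $I$ exceeds $\delta$. Having established $f\in\mathcal{C}(\mathbb{R}_+)$, the conclusion $\lim_{w\to\infty}\|E_{w,r}^{\chi}f-f\|_\infty=0$ is exactly the second assertion of Theorem~\ref{theorem1}.
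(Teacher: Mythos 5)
Your proof is correct and follows exactly the route the paper intends: the paper simply asserts that Theorem~\ref{theorem2} is immediate from Theorem~\ref{theorem1}, and you supply the one detail it leaves implicit, namely the inclusion $CB_{\text{comp}}(\mathbb{R}_+)\subseteq\mathcal{C}(\mathbb{R}_+)$, with a correct handling of the seam case via the enlarged interval.
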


\begin{theorem}\label{theorem3}
Let	$\chi\in\phi$ be a kernel such that $M_0\left(\chi\right)>0$. If $f\in CB_{\text{comp}}\left( \mathbb{R}_{+}\right) $, then we obtain
	\begin{equation*}
		\lim_{w\rightarrow\infty}\left\Vert E_{w,r}^{\chi}f-f\right\Vert _{p}=0,\text{ \ \ }1\leq p<\infty .
	\end{equation*}
\end{theorem}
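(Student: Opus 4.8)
The plan is to estimate $\|E_{w,r}^{\chi}f-f\|_p^p=\int_{\mathbb{R}_+}\bigl|E_{w,r}^{\chi}f(x)-f(x)\bigr|^p\,\frac{dx}{x}$ by splitting $\mathbb{R}_+$ into a compact interval $[e^{-M},e^M]$ and its complement: on the compact part I would invoke the uniform convergence of Theorem~\ref{theorem2}, and on the tail I would use Lemma~\ref{Lemma1}, exploiting the compact support of $f$.

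\emph{Preliminaries.} Fix $a>0$ with $\operatorname{supp}(f)\subseteq[e^{-a},e^a]$. Since for $t_j\in[1,e^{1/w}]$ one has $0\le\tfrac{m}{r}\log(t_1\cdots t_r)\le\tfrac{r}{w}$, the integral $F_{r,e^{1/w}}\bigl(e^{k/w}\bigr)$ vanishes unless $\tfrac{k}{w}\in[-a-r,a]$; hence, putting $\gamma:=a+r$, only the indices $k$ with $\tfrac{k}{w}\in[-\gamma,\gamma]$ contribute to \eqref{MSESO}, and for those $\bigl|F_{r,e^{1/w}}(e^{k/w})\bigr|\le(2^r-1)\|f\|_\infty$ exactly as in the computation leading to \eqref{ineq3.2}. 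Next, Hölder's inequality on the counting measure with exponents $p$ and $p/(p-1)$ (trivially so when $p=1$), together with $(\chi_2)$, yields
$$\bigl|E_{w,r}^{\chi}f(x)\bigr|^p\le M_0(\chi)^{p-1}\sum_{k\in\mathbb{Z}}\bigl|\chi(e^{-k}x^w)\bigr|\,\bigl|F_{r,e^{1/w}}(e^{k/w})\bigr|^p,$$
and since the summand is $0$ for $\tfrac{k}{w}\notin[-\gamma,\gamma]$ and otherwise $\le(2^r-1)^p\|f\|_\infty^p$,
$$\bigl|E_{w,r}^{\chi}f(x)\bigr|^p\le M_0(\chi)^{p-1}(2^r-1)^p\|f\|_\infty^p\sum_{k/w\in[-\gamma,\gamma]}\bigl|\chi(e^{-k}x^w)\bigr|.$$

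\emph{Main argument.} Let $\varepsilon>0$. Apply Lemma~\ref{Lemma1} with this $\gamma$ to obtain $M>0$ — which I enlarge, if necessary, so that $[e^{-a},e^a]\subseteq[e^{-M},e^M]$ — with $\int_{x\notin[e^{-M},e^M]}w\bigl|\chi(e^{-k}x^w)\bigr|\,\frac{dx}{x}<\varepsilon$ for all large $w$ and all $k$ with $\tfrac{k}{w}\in[-\gamma,\gamma]$. Since $f\equiv0$ off $[e^{-M},e^M]$, integrating the last pointwise bound over the complement and using that there are at most $2\gamma w+1$ admissible indices $k$ gives
$$\int_{x\notin[e^{-M},e^M]}\bigl|E_{w,r}^{\chi}f(x)-f(x)\bigr|^p\,\frac{dx}{x}\le M_0(\chi)^{p-1}(2^r-1)^p\|f\|_\infty^p\,\frac{2\gamma w+1}{w}\,\varepsilon\le C\varepsilon$$
for all large $w$, with $C$ independent of $w$. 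On the compact part, with $M$ now fixed,
$$\int_{e^{-M}}^{e^M}\bigl|E_{w,r}^{\chi}f(x)-f(x)\bigr|^p\,\frac{dx}{x}\le 2M\,\bigl\|E_{w,r}^{\chi}f-f\bigr\|_\infty^p\longrightarrow 0\qquad(w\to\infty)$$
by Theorem~\ref{theorem2}. Adding the two contributions, then letting $w\to\infty$ and $\varepsilon\to0$, proves the claim. (The hypothesis $(\chi_1)$ guarantees, via the substitution $v=e^{-k}x^w$, that $E_{w,r}^{\chi}f\in L^p(\mathbb{R}_+)$, so the norm is finite from the outset, and $M_0(\chi)<\infty$ makes the constants above finite.)

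The only point requiring care is the order of the quantifiers: $M$ must be fixed, depending on $\varepsilon$ and on $\gamma$ (i.e.\ on $\operatorname{supp}f$), \emph{before} the limit $w\to\infty$ is taken, and one must verify that enlarging $M$ so as to contain $\operatorname{supp}(f)$ does not destroy the conclusion of Lemma~\ref{Lemma1} — which is immediate, since shrinking the complement only decreases the tail integral. Everything else is routine bookkeeping with the bounds $M_0(\chi)<\infty$ and $\bigl|F_{r,e^{1/w}}(e^{k/w})\bigr|\le(2^r-1)\|f\|_\infty$.
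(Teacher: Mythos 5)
Your proof is correct, and it assembles the same core estimates as the paper by a somewhat different route. The paper proves this theorem via the Vitali convergence theorem: it verifies (i) a uniform tail condition outside $\left[e^{-M},e^{M}\right]$ and (ii) uniform absolute continuity of the integrals $\int_E\left|\left(E_{w,r}^{\chi}f\right)(x)\right|^p\frac{dx}{x}$, and then combines these with the uniform convergence of Theorem \ref{theorem2}. Your tail estimate is essentially identical to the paper's verification of (i): the same observation that $F_{r,e^{1/w}}\left(e^{k/w}\right)$ vanishes unless $k/w$ lies in a fixed interval determined by $\operatorname{supp}f$, the same Jensen/H\"older step producing the factor $M_0(\chi)^{p-1}$, the same appeal to Lemma \ref{Lemma1}, and the same counting of the $O(\gamma w)$ admissible indices against the factor $1/w$. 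Where you diverge is in how the pieces are glued: instead of invoking Vitali, you integrate $\left\|E_{w,r}^{\chi}f-f\right\|_\infty^p$ over the set $\left[e^{-M},e^{M}\right]$, which has finite measure $2M$ with respect to $\frac{dx}{x}$, and let $w\to\infty$ there directly. This makes the paper's condition (ii) — and hence the Vitali theorem itself — unnecessary, so your argument is slightly more elementary and self-contained; the paper's version has the advantage of packaging the convergence into a standard theorem and of exhibiting the equi-integrability explicitly. Your remarks on the order of quantifiers (fixing $\gamma$ from $\operatorname{supp}f$, then $M=M(\varepsilon,\gamma)$, then $w\to\infty$) and on the harmlessness of enlarging $M$ are exactly the points that need care, and you handle them correctly.
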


\begin{proof}
	For this proof, we have to show that
	\begin{equation*}
		\lim_{w\rightarrow\infty}\int_{\mathbb{R}_+}
		\left\vert \left( E_{w,r}^{\chi}f\right) \left( x\right) -f\left( x\right)
		\right\vert ^{p}\frac{dx}{x}=0.
	\end{equation*}
	
	In order to do that, we make use of the Vitali convergence theorem (see, e.g. \cite{folland}).
	By Theorem \ref{theorem2}, it is necessary to demonstrate that the following conditions are satisfied:
	\begin{itemize}
		\item[(i)] For every $\varepsilon>0$, there exists $E_{\varepsilon
		}\in B\left( \mathbb{R}^{+}\right)$, where $B\left( \mathbb{R}_{+}\right) $ is the $\sigma -$algebra of all Lebesgue measurable subsets of $\mathbb{R}_{+}$, with $\mu \left( E_{\varepsilon }\right) <\infty $ and such that for
		every $F\in B\left( \mathbb{R}^{+}\right) $ with $F\cap E_{\varepsilon
		}=\emptyset$, we get
		\begin{equation*}
			\underset{F}{\int }\left\vert \left( E_{w,r}^{\chi}f\right) \left( x\right)
			\right\vert ^{p}\frac{dx}{x}<\varepsilon 
		\end{equation*}
	for sufficiently large $w>0$.
	\item[(ii)] For every $\varepsilon >0$, there exists $\delta >0$
	such that if $E\in B\left( \mathbb{R}^{+}\right) $ is such that $\underset{E}%
	{\int }\frac{dx}{x}<\delta $, we get
	\begin{equation*}
		\underset{E}{\int }\left\vert \left( E_{w,r}^{\chi}f\right) \left( x\right)
		\right\vert ^{p}\frac{dx}{x}<\varepsilon
	\end{equation*}
for sufficiently large $w>0$.
	\end{itemize}	

Regarding (i), assume that the support of $f$, denoted by $\operatorname{supp}f$, is contained within the interval $\left[ e^{-A},e^{A}\right]$ for some $A>0$. Now, let $\varepsilon >0$ and $\gamma >0$ such that $\gamma >A+1.$ Thus, we can infer that for every $k \notin \left[ -\gamma w, \gamma w \right]$ with $w \geq r$, the following integrals hold:
	\begin{equation}
		F_{r,e^{\frac{1}{w}}}\left( e^{\frac{k}{w}}\right) =w^{r}\underset{1}{%
			\overset{e^{\frac{1}{w}}}{\int }}...\underset{1}{\overset{e^{\frac{1}{w}}}{%
				\int }}\sum_{m=1}^{r}\left( -1\right) ^{1-m}%
		\binom{r}{m}f\left( e^{\frac{k}{w}}\left( t_{1}...t_{r}\right) ^{\frac{m}{r}%
		}\right) \frac{dt_{1}}{t_{1}}...\frac{dt_{r}}{t_{r}}=0. \label{NI}
	\end{equation}
	Indeed, if \ $k<-\gamma w$ and $t_{j}\in \left[ 1,e^{\frac{1}{w}}\right]
	,j=1,...,r$, we have 
	\begin{equation*}
		e^{\frac{k}{w}}\left( t_{1}...t_{r}\right) ^{\frac{m}{r}}\leq e^{-\gamma
		}e^{r.\frac{1}{wr}m}=e^{-\gamma }e^{\frac{m}{w}}<e^{-\gamma +1}<e^{-A}
	\end{equation*}
and moreover, if $k>\gamma w,$ we get
	\begin{equation*}
		e^{\frac{k}{w}}\left( t_{1}...t_{r}\right) ^{\frac{m}{r}}>e^{\gamma
			+1}>e^{A}
	\end{equation*}
from which we obtain that the integrals in \eqref{NI} are null. Therefore, via Lemma \ref{Lemma1}, with the above choice of $\gamma$ and for a fixed $\varepsilon > 0$, we know that there exists some $M > 0$ (which, without loss of generality, can be assumed to satisfy $M > A$) such that
	\begin{equation*}
		\underset{x\notin \left[ e^{-M},e^{M}\right] }{\int }w\left\vert \chi \left(
		e^{-k}x^{w}\right) \right\vert \frac{dx}{x}<\varepsilon
	\end{equation*}
for sufficiently large $w>0$ and $k\in \left[ -\gamma w,\gamma w\right] .$
Then, using Jensen inequality and Fubini-Tonelli theorem, we can write what follows:
	\begin{eqnarray*}
		\underset{x\notin \left[ e^{-M},e^{M}\right] }{\int }\left\vert \left(
		E_{w,r}^{\chi}f\right) \left( x\right) \right\vert ^{p}\frac{dx}{x} &=&
		\underset{x\notin \left[ e^{-M},e^{M}\right] }{\int }\left\vert \sum_{k\in\mathbb{Z}}\chi \left( e^{-k}x^{w}\right) F_{r,e^{\frac{1}{w}%
		}}\left( e^{\frac{k}{w}}\right) \right\vert ^{p}\frac{dx}{x} \\
		&\leq & \left[ M_{0}\left( \chi \right) \right]
		^{p-1}\underset{x\notin \left[ e^{-M},e^{M}\right] }{\int }\sum_{k\in\mathbb{Z}}\left\vert \chi \left(
		e^{-k}x^{w}\right) \right\vert\left\vert F_{r,e^{\frac{1}{w}}}\left( e^{\frac{k}{w}}\right)
		\right\vert ^{p}\frac{dx}{x} \\
		&=&\left[ M_{0}\left( \chi \right) \right] ^{p-1}\sum_{k\in\left[-\gamma w,\gamma w\right]}\left\vert F_{r,e^{\frac{1}{w}}}\left( e^{%
			\frac{k}{w}}\right) \right\vert ^{p}\underset{x\notin \left[ e^{-M},e^{M}\right] }{\int }\left\vert \chi \left( e^{-k}x^{w}\right) \right\vert \frac{dx}{x}.
	\end{eqnarray*}
Now, using twice Jensen inequality, we can easy to see that
	\begin{eqnarray}
		\left\vert F_{r,e^{\frac{1}{w}}}\left( e^{\frac{k}{w}}\right) \right\vert
		^{p} &=&\left\vert w^{r}\underset{1}{\overset{e^{\frac{1}{w}}}{\int }}...%
		\underset{1}{\overset{e^{\frac{1}{w}}}{\int }}\sum_{m=1}^{r}\binom{r}{m}f\left( e^{\frac{k}{w}}\left( t_{1}...t_{r}\right) ^{
			\frac{m}{r}}\right) \frac{dt_{1}}{t_{1}}...\frac{dt_{r}}{t_{r}}\right\vert
		^{p} \notag\\
		&\leq &w^{r}\underset{1}{\overset{e^{\frac{1}{w}}}{\int }}...\underset{1}{%
			\overset{e^{\frac{1}{w}}}{\int }}\left\vert \sum_{m=1}^{r}
		\binom{r}{m}f\left( e^{\frac{k}{w}}\left( t_{1}...t_{r}\right) ^{\frac{m}{r}%
		}\right) \right\vert ^{p}\frac{dt_{1}}{t_{1}}...\frac{dt_{r}}{t_{r}} \notag\\
		&\leq &\left( 2^{r}-1\right) ^{p-1}w^{r}\underset{1}{\overset{e^{\frac{1}{w}}%
			}{\int }}...\underset{1}{\overset{e^{\frac{1}{w}}}{\int }}\sum_{m=1}^{r}\binom{r}{m}\left\vert f\left( e^{\frac{k}{w}}\left(
		t_{1}...t_{r}\right) ^{\frac{m}{r}}\right) \right\vert ^{p}\frac{dt_{1}}{%
			t_{1}}...\frac{dt_{r}}{t_{r}}. \label{ineq3.4}
	\end{eqnarray}
Then, we obtain
	\begin{align*}
		&\underset{x\notin \left[ e^{-M},e^{M}\right] }{\int }\left\vert \left(
		E_{w}^{\chi ,r}f\right) \left( x\right) \right\vert ^{p}\frac{dx}{x}\\
		 &\leq
		\left( 2^{r}-1\right) ^{p-1}\left[ M_{0}\left( \chi \right) \right]
		^{p-1}w^{r-1}\sum_{\left|k\right|\leq\gamma w}\underset
		{1}{\overset{e^{\frac{1}{w}}}{\int }}...\underset{1}{\overset{e^{\frac{1}{w}}%
			}{\int }}\sum_{m=1}^{r}\binom{r}{m}\left\vert f\left( e^{%
			\frac{k}{w}}\left( t_{1}...t_{r}\right) ^{\frac{m}{r}}\right) \right\vert
		^{p}\frac{dt_{1}}{t_{1}}...\frac{dt_{r}}{t_{r}}\underset{\left\vert
			x\right\vert >e^{M}}{\int }\left\vert \chi \left( e^{-k}x^{w}\right)
		\right\vert \frac{dx}{x} \\
		&<\varepsilon \left( 2^{r}-1\right) ^{p-1}\left[ M_{0}\left( \chi \right) %
		\right] ^{p-1}\underset{\left\vert k\right\vert \leq \gamma w}{\sum }w^{r-1}%
		\underset{1}{\overset{e^{\frac{1}{w}}}{\int }}...\underset{1}{\overset{e^{
		\frac{1}{w}}}{\int }}\sum_{m=1}^{r}\binom{r}{m} \left\vert f\left( e^{\frac{k}{w}}\left( t_{1}...t_{r}\right) ^{
		\frac{m}{r}}\right) \right\vert ^{p}\frac{dt_{1}}{t_{1}}...\frac{dt_{r}}{t_{r}} \\
		&\leq \varepsilon \left( 2^{r}-1\right) ^{p}\left[ M_{0}\left( \chi \right) %
		\right] ^{p-1}\left\Vert f\right\Vert _{\infty }^{p}w^{-1}\left[ \underset{%
			\left\vert k\right\vert \leq \gamma w}{\sum }1\right] \\
		&\leq \varepsilon \left( 2^{r}-1\right) ^{p}\left[ M_{0}\left( \chi \right) %
		\right] ^{p-1}\left\Vert f\right\Vert _{\infty }^{p}w^{-1}\left( 2\gamma
		+1\right)
	\end{align*}
for sufficiently large $w>0$. From this point, it is clear that (i)  immediately follows by setting $E_{\varepsilon}:=\left[e^{-M},e^M\right]$ and $F:=\mathbb{R}_+\backslash E_{\varepsilon}$. 

As for (ii), let $E\subset\mathbb{R}_+$ be a measurable set such that $\int_{E}\frac{dx}{x}<\delta$.  Since $f\in CB_{\text{comp}}\left(\mathbb{R}_+\right)$, then by uisng the inequality \eqref{ineq3.2}, we immediately have that
\begin{equation*}
	\left|\left(E_{w,r}^\chi f\right)\left(x\right)\right|\leq\left(2^r-1\right)^p\left\|f \right\|_\infty^p\left[M_0\left(\chi\right)\right]^p. 
\end{equation*}
Thus, we obtain
\begin{eqnarray*}
	\int_{E}\left|\left(E_{w,r}^\chi f\right)\left(x\right)\right|^p\frac{dx}{x}&\leq&\mu\left(E\right)\left(2^r-1\right)^p\left\|f \right\|_\infty^p\left[M_0\left(\chi\right)\right]^p\\
	&<&\delta\left(2^r-1\right)^p\left\|f \right\|_\infty^p\left[M_0\left(\chi\right)\right]^p.
\end{eqnarray*}
Now, the assertion follows by taking
\begin{equation*}
	\delta:=\frac{\varepsilon}{\left(2^r-1\right)^p\left\|f \right\|_\infty^p\left[M_0\left(\chi\right)\right]^p}. 
\end{equation*}
This completes the proof of theorem. 
\end{proof}
The following inequality demonsrates that the Mellin-Steklov type exponential sampling operators are well-defined for functions in the space $L^{p}$.
\begin{theorem}\label{theorem4}
Let $\chi\in\phi$ be a kernel. If $f\in L^{p}\left( \mathbb{R}_{+}\right) ,$ $%
	1\leq p<\infty$, then the inequality
	\begin{equation*}
		\left\Vert E_{w,r}^{\chi}f\right\Vert _{p}\leq \left( 2^{r}-1\right) \left[
		M_{0}\left( \chi \right) \right] ^{\frac{p-1}{p}}\left\Vert \chi \right\Vert
		_{1}^{\frac{1}{p}}\left\Vert f\right\Vert _{p}
	\end{equation*}
holds.
\end{theorem}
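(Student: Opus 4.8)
The plan is to peel off the kernel first, reducing the $L^{p}$-norm to a discrete sum of $p$-th powers of the Mellin--Steklov integrals $F_{r,e^{\frac1w}}(e^{\frac kw})$, and then to absorb that sum into $\left\Vert f\right\Vert_{p}^{p}$ by a change of variables. Starting from
\[
\left\Vert E_{w,r}^{\chi}f\right\Vert_{p}^{p}=\int_{\mathbb{R}_{+}}\left\vert \sum_{k\in\mathbb{Z}}\chi\left(e^{-k}x^{w}\right)F_{r,e^{\frac{1}{w}}}\left(e^{\frac{k}{w}}\right)\right\vert^{p}\frac{dx}{x},
\]
I would apply Jensen's inequality with respect to the sub-probability weights $\left\vert\chi\left(e^{-k}x^{w}\right)\right\vert/M_{0}\left(\chi\right)$, exactly as in the proof of Theorem~\ref{theorem3}; this produces the factor $\left[M_{0}\left(\chi\right)\right]^{p-1}$ and the integrand $\sum_{k}\left\vert\chi\left(e^{-k}x^{w}\right)\right\vert\left\vert F_{r,e^{\frac{1}{w}}}\left(e^{\frac{k}{w}}\right)\right\vert^{p}$. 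Since all terms are nonnegative, Tonelli's theorem permits interchanging the sum and the integral, and for each fixed $k$ the substitution $u=e^{-k}x^{w}$, under which $\frac{dx}{x}=\frac{1}{w}\frac{du}{u}$, gives $\int_{\mathbb{R}_{+}}\left\vert\chi\left(e^{-k}x^{w}\right)\right\vert\frac{dx}{x}=\frac{1}{w}\left\Vert\chi\right\Vert_{1}$, independently of $k$. Hence
\[
\left\Vert E_{w,r}^{\chi}f\right\Vert_{p}^{p}\leq\frac{\left[M_{0}\left(\chi\right)\right]^{p-1}\left\Vert\chi\right\Vert_{1}}{w}\sum_{k\in\mathbb{Z}}\left\vert F_{r,e^{\frac{1}{w}}}\left(e^{\frac{k}{w}}\right)\right\vert^{p}.
\]

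Into the last sum I would substitute the pointwise estimate \eqref{ineq3.4} for $\left\vert F_{r,e^{\frac{1}{w}}}\left(e^{\frac{k}{w}}\right)\right\vert^{p}$ and, again by Tonelli, move the $k$-sum inside the $r$-fold Steklov integral, so that everything reduces to controlling, for each $m=1,\dots,r$, the quantity $w^{r}\int_{1}^{e^{1/w}}\cdots\int_{1}^{e^{1/w}}\sum_{k\in\mathbb{Z}}\left\vert f\left(e^{\frac{k}{w}}\left(t_{1}\cdots t_{r}\right)^{\frac{m}{r}}\right)\right\vert^{p}\frac{dt_{1}}{t_{1}}\cdots\frac{dt_{r}}{t_{r}}$. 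The key point is that the intervals $\left\{\left[e^{k/w},e^{(k+1)/w}\right]\right\}_{k\in\mathbb{Z}}$ tile $\mathbb{R}_{+}$ with respect to the Haar measure $\frac{dx}{x}$, so that the function $a\mapsto\sum_{k}\left\vert f\left(e^{k/w+a}\right)\right\vert^{p}$ is $\frac{1}{w}$-periodic with integral $\left\Vert f\right\Vert_{p}^{p}$ over one period; after passing to logarithms and straightening the arguments $e^{\frac{k}{w}}\left(t_{1}\cdots t_{r}\right)^{\frac{m}{r}}$, integrating in one of the variables together with summing over $k$ collapses the inner sum to $\left\Vert f\right\Vert_{p}^{p}$, uniformly in the remaining $r-1$ variables, whose integration over $\left[1,e^{1/w}\right]^{r-1}$ supplies the factor $w^{-(r-1)}$ that cancels against $w^{r-1}$. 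Summing over $m$ via $\sum_{m=1}^{r}\binom{r}{m}=2^{r}-1$ (one power of this factor already having been spent in \eqref{ineq3.4}) then gives $\sum_{k}\left\vert F_{r,e^{\frac{1}{w}}}\left(e^{\frac{k}{w}}\right)\right\vert^{p}\leq\left(2^{r}-1\right)^{p}w\left\Vert f\right\Vert_{p}^{p}$, and inserting this into the previous display and taking $p$-th roots yields the assertion; in particular $E_{w,r}^{\chi}$ maps $L^{p}\left(\mathbb{R}_{+}\right)$ into itself.

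Running the same chain of inequalities with $\left\vert f\right\vert$ and $\left\vert\chi\right\vert$ in place of $f$ and $\chi$ from the outset also shows that the defining series converges absolutely for almost every $x\in\mathbb{R}_{+}$, so that $E_{w,r}^{\chi}f$ is a genuine element of $L^{p}\left(\mathbb{R}_{+}\right)$ --- this is the sense in which the inequality certifies that the operators act on $L^{p}$. I expect the main obstacle to be precisely the change-of-variables step for the sampled sum $\sum_{k}\left\vert f\left(e^{\frac{k}{w}}\left(t_{1}\cdots t_{r}\right)^{\frac{m}{r}}\right)\right\vert^{p}$: one has to track both the Jacobian and the altered interval of integration produced by the map $t_{j}\mapsto t_{j}^{m/r}$ and reconcile them with the period $\frac{1}{w}$, and one has to be sure that every interchange of $\sum_{k\in\mathbb{Z}}$ with an integral is legitimate, which here follows at once from Tonelli's theorem since all the integrands are nonnegative. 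The two Jensen steps and the Haar-measure substitution for the kernel are routine.
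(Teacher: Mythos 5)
Your proposal follows the paper's proof essentially step by step: Jensen's inequality with the weights $\left\vert \chi\left(e^{-k}x^{w}\right)\right\vert/M_{0}\left(\chi\right)$, Fubini--Tonelli, the Haar-type substitution giving $\int_{\mathbb{R}_{+}}\left\vert\chi\left(e^{-k}x^{w}\right)\right\vert\frac{dx}{x}=w^{-1}\left\Vert\chi\right\Vert_{1}$, the double-Jensen estimate \eqref{ineq3.4}, and then the tiling of $\mathbb{R}_{+}$ by the intervals $\left[e^{k/w},e^{(k+1)/w}\right]$ to collapse the sum over $k$ together with one integration into $\left\Vert f\right\Vert_{p}^{p}$. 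So the route is the same as the paper's; the only point of substance is precisely the step you flag as the obstacle, and there your argument as written does not deliver the stated constant. For fixed $m$ and fixed $t_{2},\dots,t_{r}$, the substitution $a=\frac{m}{r}\log t_{1}$ has Jacobian $\frac{dt_{1}}{t_{1}}=\frac{r}{m}\,da$ and maps $\left[1,e^{1/w}\right]$ onto an interval of length $\frac{m}{rw}\leq\frac{1}{w}$, so periodicity and nonnegativity of $g(a)=\sum_{k}\left\vert f\left(e^{k/w+a}\right)\right\vert^{p}$ only give $\int_{1}^{e^{1/w}}\sum_{k}\left\vert f\left(e^{\frac{k}{w}}\left(t_{1}\cdots t_{r}\right)^{\frac{m}{r}}\right)\right\vert^{p}\frac{dt_{1}}{t_{1}}\leq\frac{r}{m}\left\Vert f\right\Vert_{p}^{p}$, with the clean collapse to exactly $\left\Vert f\right\Vert_{p}^{p}$ only when $m=r$. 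Your chain therefore yields $\sum_{k}\left\vert F_{r,e^{1/w}}\left(e^{k/w}\right)\right\vert^{p}\leq\left(2^{r}-1\right)^{p-1}w\left\Vert f\right\Vert_{p}^{p}\sum_{m=1}^{r}\binom{r}{m}\frac{r}{m}$, i.e.\ the constant $\bigl(\left(2^{r}-1\right)^{p-1}\sum_{m=1}^{r}\binom{r}{m}\frac{r}{m}\bigr)^{1/p}$ rather than $2^{r}-1$ for $r\geq2$; and the loss is genuine within this method, not bookkeeping: for $r=2$, $m=1$ the variable $\frac{1}{2}\left(v_{1}+v_{2}\right)$ with $v_{i}$ uniform on $\left[0,1/w\right]$ has triangular density peaking at $2w$, so concentrating $f$ near the peak makes that term approach $2\left\Vert f\right\Vert_{p}^{p}$.

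It is only fair to add that the paper's own proof makes the identical omission: the change of variable $y=e^{\frac{k}{w}}t_{1}^{\frac{m}{r}}$ is written there as an equality with the factor $\frac{r}{m}$ silently dropped, so the constant $2^{r}-1$ is not justified by that argument either when $r\geq2$ (for $r=1$ both arguments are correct, and the claimed bound coincides with the known Kantorovich case). In any case an inequality of this shape, with the larger constant, suffices for everything the theorem is used for, namely that $E_{w,r}^{\chi}$ is well defined and bounded from $L^{p}\left(\mathbb{R}_{+}\right)$ into itself; your closing remark on almost everywhere absolute convergence of the defining series is fine, since the whole estimate can be run with $\left\vert f\right\vert$ and $\left\vert\chi\right\vert$.
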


\begin{proof}
As in the proof of the previous theorem, using Jensen inequality and the Fubini-Tonelli theorem, we can write what follows:
	\begin{equation*}
		\left\Vert E_{w,r}^{\chi}f\right\Vert _{p}^{p}
		\leq\left[ M_{0}\left( \chi \right) \right] ^{p-1}\sum_{k\in\mathbb{Z}}\left\vert F_{r,e^{\frac{1}{w}}}\left( e^{\frac{k}{w}}\right)
		\right\vert ^{p}\underset{\mathbb{R}^{+}}{\int }\left\vert \chi \left(
		e^{-k}x^{w}\right) \right\vert \frac{dx}{x}.
	\end{equation*}
	Now, by applying the change of variable $x=t^{\frac{1}{w}}e^{\frac{k}{w}},$ we arrive at the following:
	\begin{equation*}
		\left\Vert E_{w,r}^{\chi}f\right\Vert _{p}^{p}\leq \left[ M_{0}\left( \chi
		\right) \right] ^{p-1}\left\Vert \chi \right\Vert _{1}w^{-1}\sum_{k\in\mathbb{Z}}\left\vert F_{r,e^{\frac{1}{w}}}\left( e^{\frac{k}{w}%
		}\right) \right\vert ^{p}.
	\end{equation*}
Using the inequality \eqref{ineq3.4} and applying the change of variable $y=e^{\frac{k}{w}}\left( t_{1}\right) ^{\frac{m}{r}},m=1,...,r,$ we can write that 
	\begin{eqnarray*}
		\left\vert F_{r,e^{\frac{1}{w}}}\left( e^{\frac{k}{w}}\right) \right\vert
		^{p}
		&\leq &\left( 2^{r}-1\right) ^{p-1}w^{r}\underset{1}{\overset{e^{\frac{1}{w}}%
			}{\int }}...\underset{1}{\overset{e^{\frac{1}{w}}}{\int }}\sum_{m=1}^{r}\binom{r}{m}\left\vert f\left( e^{\frac{k}{w}}\left(
		t_{1}...t_{r}\right) ^{\frac{m}{r}}\right) \right\vert ^{p}\frac{dt_{1}}{%
			t_{1}}...\frac{dt_{r}}{t_{r}} \\
		&= &\left( 2^{r}-1\right) ^{p-1}w^{r}\sum_{m=1}^{r}
		\binom{r}{m}\underset{1}{\overset{e^{\frac{1}{w}}}{\int }}...\underset{1}{
		\overset{e^{\frac{1}{w}}}{\int }}\left\{ \underset{e^{\frac{k}{w}}}{\overset{e^{\frac{k}{w}+\frac{m}{rw}}}{\int }}\left\vert f\left( y\left(
		t_{2}...t_{r}\right) ^{\frac{m}{r}}\right) \right\vert ^{p}\frac{dy}{y}%
		\right\} \frac{dt_{2}}{t_{2}}...\frac{dt_{r}}{t_{r}} \\
		&\leq &\left( 2^{r}-1\right) ^{p-1}w^{r}\sum_{m=1}^{r}
		\binom{r}{m}\underset{1}{\overset{e^{\frac{1}{w}}}{\int }}...\underset{1}{%
			\overset{e^{\frac{1}{w}}}{\int }}\left\{ \underset{e^{\frac{k}{w}}}{\overset{%
				e^{\frac{k+1}{w}}}{\int }}\left\vert f\left( y\left( t_{2}...t_{r}\right) ^{%
			\frac{m}{r}}\right) \right\vert ^{p}\frac{dy}{y}\right\} \frac{dt_{2}}{t_{2}}%
		...\frac{dt_{r}}{t_{r}}.
	\end{eqnarray*}
Thus, we have 
	\begin{align*}
		&\left\Vert E_{w,r}^{\chi}f\right\Vert _{p}^{p}\\
		 &\leq \left[ M_{0}\left(
		\chi \right) \right] ^{p-1}\left\Vert \chi \right\Vert _{1}\left(
		2^{r}-1\right) ^{p-1}w^{r-1}\sum_{m=1}^{r}\binom{r}{m}%
		\underset{1}{\overset{e^{\frac{1}{w}}}{\int }}...\underset{1}{\overset{e^{%
					\frac{1}{w}}}{\int }}\left\{ \sum_{k\in\mathbb{Z}}\underset{e^{\frac{k}{w}}}{%
			\overset{e^{\frac{k+1}{w}}}{\int }}\left\vert f\left( y\left(
		t_{2}...t_{r}\right) ^{\frac{m}{r}}\right) \right\vert ^{p}\frac{dy}{y}%
		\right\} \frac{dt_{2}}{t_{2}}...\frac{dt_{r}}{t_{r}} \\
		&=\left[ M_{0}\left( \chi \right) \right] ^{p-1}\left\Vert \chi \right\Vert
		_{1}\left( 2^{r}-1\right) ^{p-1}w^{r-1}\sum_{m=1}^{r}%
		\binom{r}{m}\underset{1}{\overset{e^{\frac{1}{w}}}{\int }}...\underset{1}{%
			\overset{e^{\frac{1}{w}}}{\int }}  \frac{dt_{2}}{t_{2}}...\frac{dt_{r}}{t_{r}}\left\Vert f\left( \cdot
		\right) \left( t_{2}...t_{r}\right) ^{\frac{m}{r}} \right\Vert
		_{p}^{p}.
	\end{align*}
	Now, recalling that
	\begin{equation*}
		\left\Vert f\left( \cdot \right)  \left( t_{2}...t_{r}\right) ^{\frac{m
			}{r}} \right\Vert _{p}^{p}=\left\Vert f\left( \cdot \right)
		\right\Vert _{p}^{p},\text{ }
	\end{equation*}
for every $t_{j}\in \left[ 1,e^{\frac{1}{w}}\right] ,$ $j=2,3,...,r; m=1,...,r,$ we finally have:
	\begin{eqnarray*}
		\left\Vert E_{w,r}^{\chi}f\right\Vert _{p}^{p} &\leq &\left[ M_{0}\left(
		\chi \right) \right] ^{p-1}\left\Vert \chi \right\Vert _{1}\left(
		2^{r}-1\right) ^{p-1}\left\Vert f\right\Vert _{p}^{p}w^{r-1}\sum_{m=1}^{r}\binom{r}{m}\underset{1}{\overset{e^{\frac{1}{w}}}{%
				\int }}...\underset{1}{\overset{e^{\frac{1}{w}}}{\int }}\frac{dt_{2}}{t_{2}}%
		...\frac{dt_{r}}{t_{r}} \\
		&\leq &\left[ M_{0}\left( \chi \right) \right] ^{p-1}\left\Vert \chi
		\right\Vert _{1}\left( 2^{r}-1\right) ^{p}\left\Vert f\right\Vert _{p}^{p}. 
	\end{eqnarray*}
Hence, the proof is completed. 
\end{proof}

Now, we are able to $L^p$ convergence result for our operators. 

\begin{theorem}
	Let $\chi\in\phi$ be a kernel and let $f\in L^{p}\left( \mathbb{R}_{+}\right) ,1\leq p<\infty,$ be a function.  Then
	\begin{equation*}
		\lim_{w\rightarrow\infty}\left\Vert E_{w,r}^{\chi}f-f\right\Vert_p =0
	\end{equation*}
holds.
\end{theorem}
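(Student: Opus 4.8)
The plan is to use a standard density argument, combining the uniform boundedness of the operators established in Theorem \ref{theorem4} with the convergence already proved on the dense subspace $CB_{\text{comp}}\left(\mathbb{R}_+\right)$ in Theorem \ref{theorem3}.

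First I would recall that $CB_{\text{comp}}\left(\mathbb{R}_+\right)$ is dense in $L^p\left(\mathbb{R}_+\right)$ for $1\leq p<\infty$ with respect to the norm $\left\|\cdot\right\|_p$ associated with the measure $\frac{dx}{x}$. This follows from the usual density of compactly supported continuous functions in Lebesgue spaces after the change of variable $x=e^{u}$, which is an isometric isomorphism from $L^p\left(\mathbb{R}_+,\frac{dx}{x}\right)$ onto $L^p\left(\mathbb{R},du\right)$; a function $g$ whose pullback $g\left(e^{\cdot}\right)$ is continuous with support in $\left[-A,A\right]$ is precisely a function in $CB_{\text{comp}}\left(\mathbb{R}_+\right)$ with support in $\left[e^{-A},e^{A}\right]$. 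Next, set $C:=\left(2^{r}-1\right)\left[M_{0}\left(\chi\right)\right]^{\frac{p-1}{p}}\left\Vert\chi\right\Vert_{1}^{\frac{1}{p}}$, so that Theorem \ref{theorem4} gives $\left\Vert E_{w,r}^{\chi}h\right\Vert_{p}\leq C\left\Vert h\right\Vert_{p}$ for every $h\in L^{p}\left(\mathbb{R}_+\right)$ and every $w>0$, with $C$ independent of $w$; note also that $E_{w,r}^{\chi}$ is linear, which is immediate from the definition \eqref{MSESO}.

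Now fix $f\in L^{p}\left(\mathbb{R}_+\right)$ and $\varepsilon>0$, and choose $g\in CB_{\text{comp}}\left(\mathbb{R}_+\right)$ with $\left\Vert f-g\right\Vert_{p}<\varepsilon$. Then for every $w>0$ the triangle inequality and linearity yield
\begin{equation*}
	\left\Vert E_{w,r}^{\chi}f-f\right\Vert_{p}\leq\left\Vert E_{w,r}^{\chi}\left(f-g\right)\right\Vert_{p}+\left\Vert E_{w,r}^{\chi}g-g\right\Vert_{p}+\left\Vert g-f\right\Vert_{p}\leq\left(C+1\right)\varepsilon+\left\Vert E_{w,r}^{\chi}g-g\right\Vert_{p}.
\end{equation*}
By Theorem \ref{theorem3}, the last term tends to $0$ as $w\rightarrow\infty$ (since $\left\Vert\cdot\right\Vert_{p}$-convergence on $CB_{\text{comp}}$ is already known, even together with the stronger $\left\Vert\cdot\right\Vert_{\infty}$ statement), hence $\limsup_{w\rightarrow\infty}\left\Vert E_{w,r}^{\chi}f-f\right\Vert_{p}\leq\left(C+1\right)\varepsilon$. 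Letting $\varepsilon\rightarrow0$ gives the claim.

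There is essentially no hard step remaining: the substance of the argument has already been absorbed into Theorem \ref{theorem3} (convergence on the dense class, resting on the Vitali convergence theorem) and Theorem \ref{theorem4} (the uniform $L^{p}$ bound). The only points deserving a word of care are that the bound in Theorem \ref{theorem4} must be, and is, uniform in $w$, and that the splitting $f=g+\left(f-g\right)$ combined with linearity of $E_{w,r}^{\chi}$ is legitimate — both of which are immediate. I therefore expect this proof to be short, with the density of $CB_{\text{comp}}\left(\mathbb{R}_+\right)$ in $L^{p}\left(\mathbb{R}_+\right)$ being the only ingredient not explicitly recorded earlier in the paper.
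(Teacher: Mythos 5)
Your proof is correct and follows essentially the same route as the paper: density of $CB_{\text{comp}}\left(\mathbb{R}_+\right)$ in $L^{p}\left(\mathbb{R}_+\right)$, the uniform bound of Theorem \ref{theorem4} applied to $f-g$, and Theorem \ref{theorem3} on the dense class, combined via the triangle inequality. (Your displayed estimate is in fact slightly cleaner than the paper's, which carries a spurious extra factor of $\left\Vert f\right\Vert_{p}$ in the bound for $\left\Vert E_{w,r}^{\chi}\left(f-g\right)\right\Vert_{p}$.)
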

\begin{proof}
Since $CB_{\text{comp}}\left(\mathbb{R}_{+}\right) $ is dense in $L^{p}\left( \mathbb{R}_{+}\right)$ (see, \cite{barmansch}), for every fixed $\ \varepsilon >0$, there exists a $g\in CB_{\text{comp}}\left( \mathbb{R}_{+}\right) $ such that $\left\Vert g-f\right\Vert_{p}<\varepsilon.$  Hence, by Theorem \ref{theorem4}, we get
	\begin{eqnarray*}
		\left\Vert E_{w,r}^{\chi}f-f\right\Vert _{p} &=&\left\Vert E_{w,r}^{\chi}f-E_{w,r}^{\chi}g+E_{w,r}^{\chi}g-g+g-f\right\Vert _p\\
		&\leq &\left\Vert E_{w,r}^{\chi}f-E_{w,r}^{\chi }g\right\Vert
		_{p}+\left\Vert E_{w,r}^{\chi }g-g\right\Vert _{p}+\left\Vert g-f\right\Vert_{p} \\
		&=&\left\Vert E_{w,r}^{\chi}\left( f-g\right) \right\Vert _{p}+\left\Vert
		E_{w,r}^{\chi}g-g\right\Vert _{p}+\left\Vert g-f\right\Vert _{p} \\
		&\leq &\left\Vert g-f\right\Vert _{p}\left\{ \left[ M_{0}\left( \chi \right) %
		\right] ^{\frac{p-1}{p}}\left\Vert \chi \right\Vert _{1}^{\frac{1}{p}}\left(
		2^{r}-1\right) \left\Vert f\right\Vert _{p}\right\} +\left\Vert E_{w,r}^{\chi}g-g\right\Vert _{p} \\
		&\leq &\left[ M_{0}\left( \chi \right) %
		\right] ^{\frac{p-1}{p}}\left\Vert \chi \right\Vert _{1}^{\frac{1}{p}}\left(
		2^{r}-1\right) \left\Vert f\right\Vert _{p}\varepsilon +\left\Vert E_{w,r}^{\chi}g-g\right\Vert _{p}.
	\end{eqnarray*}
Passing to limit for $w\rightarrow \infty $ by using Theorem \ref{theorem3}, the proof is completed. 
\end{proof}

\end{document}